\newtheorem{theo}{Theorem}
\newtheorem{lemma}{Lemma}
\newtheorem{problem}{Problem}
\newtheorem{THEO}{Theorem}
\newtheorem{CONJEC}{Conjecture}
\begin{document}

%% Title, authors and addresses

%% use the tnoteref command within \title for footnotes;
%% use the tnotetext command for theassociated footnote;
%% use the fnref command within \author or \address for footnotes;
%% use the fntext command for theassociated footnote;
%% use the corref command within \author for corresponding author footnotes;
%% use the cortext command for theassociated footnote;
%% use the ead command for the email address,
%% and the form \ead[url] for the home page:
%% \title{Title\tnoteref{label1}}
%% \tnotetext[label1]{}
%% \author{Name\corref{cor1}\fnref{label2}}
%% \ead{email address}
%% \ead[url]{home page}
%% \fntext[label2]{}
%% \cortext[cor1]{}
%% \address{Address\fnref{label3}}
%% \fntext[label3]{}

\title[On zeros of $X_{1}$ exceptional orthogonal polynomials]{On the behavior of zeros of $X_{1}$-Jacobi and $X_{1}$-Laguerre exceptional orthogonal polynomials}

%% use optional labels to link authors explicitly to addresses:
%% \author[label1,label2]{}
%% \address[label1]{}
%% \address[label2]{}

\author{Yen Chi Lun}
\address{DMAP, IBILCE, UNESP - Universidade Estadual Paulista,
 15055-450 S\~ao Jos\'e do Rio Preto, SP, Brazil}
\email{yen.chilun@yahoo.com.tw}

\begin{abstract}
The $X_1$-Jacobi and the $X_1$-Laguerre exceptional orthogonal polynomials were introduced a decade ago by G\'omez-Ullate, Kamran and Milson in a series of papers.
Their fundamental role in various applications inspired many researcher to study their fundamental properties, including the behavior of their zeros.
In this note we establish some ones, including interlacing and monotonicity of the \textit{regular} zeros with respect to the parameters and the order
of the set of the \textit{exceptional}  zeros as well as the connection between the zeros of the families of $X_{1}$-Jacobi, $X_{1}$-Laguerre and Hermite polynomials.
\end{abstract}

\keywords{$X_1$-Laguerre polynomials; $X_1$-Jacobi polynomials; Exceptional orthogonal polynomials; Zeros}

\maketitle

%% \linenumbers

%% main text
The $X_{1}$-Jacobi and $X_{1}$-Laguerre exceptional orthogonal polynomials were introduced in 2009 by D.~G\'omez-Ullate, N.~Kamran and R.~Milson \textbf{\cite{GoKaMi09}}.
In a series of papers the same authors established some of the fundamental properties of the polynomials and of their natural generalizations, called the $X_{m}$ exceptional ones,
and immediately attracted the interest of other researchers to the new objects which became widely known as {\em exceptional orthogonal polynomials sequences}, or  succinctly XOPS.

Brief definitions of the sequences $X_{1}$-Jacobi and $X_{1}$-Laguerre exceptional orthogonal polynomials are as follows.
Given $\alpha >0$, the $X_{1}$-Laguerre XOPS, denoted by $\{\widehat{L}_{n}^{(\alpha)}(x)\}_{n=1}^{\infty}$, is the sequence obtained by the Gram-Schmidt orthogonalization process
from the monic polynomials
$
x+\alpha +1, (x+\alpha )^2, (x+\alpha)^3, \ldots
$
with respect to the inner product
$$
\langle P,Q \rangle_{\alpha } := \int_{0}^{\infty }P(x) Q(x)\frac{e^{-x} x^{\alpha}}{(x+\alpha)^{2}}dx.
$$
Similarly, given $\alpha, \ \beta > -1$ with $sign(\alpha)=sign(\beta)$ and $\alpha \neq \beta$, and setting
$$
a=\frac{1}{2}(\beta - \alpha), \ \ b=\frac{\beta + \alpha}{\beta - \alpha} \ \ \mbox{and} \ \ c=b+\frac{1}{a},
$$
the $X_{1}$-Jacobi XOPS, denoted by $\{\widehat{P}_{n}^{(\alpha , \beta )}(x)\}_{n=1}^{\infty}$, is the sequence obtained by the Gram-Schmidt orthogonalization the  polynomials
$
(x-c),(x-b)^2,(x-b)^3, \ldots,
$
with respect to the inner product
$$
\langle P,Q \rangle_{\alpha , \beta} := \int_{-1}^{1}P(x) Q(x) \frac{(1-x)^{\alpha}(1+x)^{\beta}}{(x-b)^{2}}dx.
$$

One of the basic directions of research has been towards understanding the behavior of the zeros of the XOPS and pretty large number of
contributions have already been made, see \textbf{\cite{Bonneux,BonneuxArno,Yen,GoKaMi10,Paco,ABJ,Horvath15,Horvath17}}. Nevertheless,
various questions about the precise location interlacing, sharp inequalities and monotonicity with respect to parameters remain open. In this note
we establish some results concerning the latter issues about the zeros of the $X_{1}$-Jacobi and $X_{1}$-Laguerre polynomials.

Already in their first papers \textbf{\cite{GoKaMi09, GoKaMi10}} G\'omez-Ullate, Kamran and Milson proved that the latter polynomials posses $n-1$ distinct zeros in
the interval of othogonality and an additional zero outside that interval. More precisely, if $\widehat{x}_{n,k}^{(\alpha)}$, $k=1,\ldots,n$, denote the zeros of $\widehat{L}_{n}^{(\alpha)}(x)$ in an increasing order,
then $\widehat{L}_{n}^{(\alpha)}(x)$ has $n-1$ distinct zeros in $(0,\infty)$ and that the remaining zero is in $(-\infty,-\alpha)$. The zeros in $(0,\infty)$ are known as the \textit{regular} zeros and the remaining  one is called the \textit{exceptional} zero.
Similarly, if we denote by $\widehat{x}_{n,k}^{(\alpha,\beta)}$, $k=1,\ldots,n$, the zeros of $\widehat{P}_{n}^{(\alpha , \beta )}(x)$, arranged in an increasing order,
then $\widehat{P}_{n}^{(\alpha , \beta )}(x)$ has $n-1$ distinct zeros in $(-1,1)$ and that the remaining zero is in either $(-\infty,b)$ or $(b,\infty)$, depending on the parameters $\alpha$ and $\beta$. Again, the ones in $(-1,1)$ are called \textit{regular} zeros and the other is the \textit{exceptional} zero.

One of our contributions is a quantitative results concerning a conjecture of A.~B.~J.~Kuijlaars and R.~Milson. It is known that all families of exceptional polynomials $\{p_{n}(x)\}_{n=m}^{\infty}$ have a weight function of the form
$$
W(x)=\frac{W_{0}(x)}{\eta(x)^{2}},
$$
where $W_{0}(x)$ is a classical orthogonal polynomials weight function and $\eta(x)$ is a certain polynomial which does not vanish in the domain of orthogonality and whose degree $m$ is equal to the number of gaps in the degree of XPOS, or equivalently, to the number of \textit{exceptional} zeros. Observe that for the $X_{1}$-Laguerre and $X_{1}$-Jacobi  polynomials $\eta(x)$ is $x+\alpha$ and $x-b$, respectively.
A.~B.~J.~Kuijlaars and R.~Milson \textbf{\cite{ABJ}} formulated the following general conjecture
\begin{CONJEC}\label{conjecture}
The \textit{regular} zeros of XOPS have the same asymptotic behaviour as the zeros of their classical counterpart. The \textit{exceptional} zeros converge to the zeros of the
denominator polynomial $\eta(x)$.
\end{CONJEC}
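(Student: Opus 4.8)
The plan is to reduce both assertions of Conjecture~\ref{conjecture}, in the $X_1$ setting, to the corresponding facts about the classical Laguerre and Jacobi polynomials, using the explicit algebraic representation of the exceptional families in terms of their classical counterparts. For the Laguerre case one has an identity of the shape
$$\widehat{L}_{n}^{(\alpha)}(x) = \pi(x)\, L_{n-1}^{(\alpha)}(x) + c_{n}\, L_{n-2}^{(\alpha)}(x),$$
where $\pi$ is a fixed linear polynomial, $c_{n}$ an explicit constant, and $L_{k}^{(\alpha)}$ denotes the classical Laguerre polynomial (possibly with a shifted parameter); an analogous expression holds for $\widehat{P}_{n}^{(\alpha,\beta)}$ in terms of $P_{n-1}^{(\alpha,\beta)}$ and $P_{n-2}^{(\alpha,\beta)}$, with $\eta(x)=x-b$. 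First I would record these representations, fix the normalizations, and use the three-term recurrence to rewrite $c_{n}L_{n-2}^{(\alpha)}$ in terms of $L_{n-1}^{(\alpha)}$ and its derivative, so that $\widehat{L}_{n}^{(\alpha)}$ is displayed as a controlled perturbation of a single classical polynomial.

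For the \emph{regular} zeros I would work on a compact subinterval $K$ of the orthogonality interval, on which $\eta$ is bounded away from zero. Dividing the representation by $\eta$ (or by $\pi$), the known ratio asymptotics of the classical Laguerre and Jacobi polynomials on $K$ show that $\widehat{L}_{n}^{(\alpha)}$ differs from a normalized classical polynomial by a term of lower order in the appropriate scaling. Combining this with the interlacing of the regular zeros with the classical zeros (which one expects to establish separately from the representation and the Gram--Schmidt construction) and invoking Hurwitz's theorem yields convergence of each regular zero to the corresponding classical zero; at the global level the normalized zero-counting measure of the regular zeros then converges to the same equilibrium measure (the arcsine law for Jacobi, the scaled Marchenko--Pastur law for Laguerre), since the bounded-degree factor $\pi$ can perturb only finitely many zeros. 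Tracking the error term in the ratio asymptotics gives the quantitative rate.

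For the \emph{exceptional} zero I would localize it near the root of $\eta$, namely $-\alpha$ for Laguerre and $b$ for Jacobi, both of which lie outside the orthogonality interval where the unique exceptional zero is already known to sit. The strategy is a squeezing argument. Fix $\varepsilon>0$. Using Perron-type asymptotics for the classical polynomials at a fixed exterior point, together with the known sign of the leading coefficient, I would show that $\widehat{L}_{n}^{(\alpha)}$ has no zero in $(-\infty,-\alpha-\varepsilon]$ for all large $n$. Since $\widehat{L}_{n}^{(\alpha)}$ has exactly one zero in $(-\infty,-\alpha)$, that zero must then lie in $(-\alpha-\varepsilon,-\alpha)$, and letting $\varepsilon\to 0$ gives convergence to $-\alpha$. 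The Jacobi case is identical with $b$ in place of $-\alpha$ and Perron asymptotics at an exterior point of $(-1,1)$.

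The hard part will be the exceptional zero. Near the singular point $-\alpha$ (resp.\ $b$) the two classical polynomials $L_{n-1}^{(\alpha)}$ and $L_{n-2}^{(\alpha)}$ both grow exponentially and the linear factor $\pi$ nearly annihilates their leading behaviour, so the location of the exceptional zero is governed by a delicate cancellation. Pinning it down---and in particular obtaining the quantitative rate of convergence to the root of $\eta$---requires the subleading term in Perron's exterior asymptotic formula rather than merely its leading order, and verifying that the surviving combination does not vanish outside a shrinking neighbourhood of $-\alpha$ is where the real work lies.
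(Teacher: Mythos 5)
This statement is labelled a \emph{Conjecture} in the paper, and the paper does not prove it: it is the general Kuijlaars--Milson conjecture about \emph{all} exceptional orthogonal polynomial sequences (exceptional Hermite, $X_m$-Laguerre, $X_m$-Jacobi, \dots), and the text only records that it was partially proved for exceptional Hermite polynomials in the cited literature and that the Mehler--Heine-type formulas of G\'omez-Ullate, Marcell\'an and Milson already imply the convergence of the exceptional zeros in the $X_1$ cases. Your proposal therefore cannot be matched against a proof in the paper; more importantly, it has two genuine gaps. First, it is restricted from the outset to the $X_1$-Laguerre and $X_1$-Jacobi families, so even if completed it would not establish the conjecture as stated. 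Second, you yourself identify the decisive step --- controlling the cancellation between $\pi(x)L_{n-1}^{(\alpha)}(x)$ and $c_nL_{n-2}^{(\alpha)}(x)$ near the root of $\eta$, where the linear factor nearly kills the leading exterior asymptotics --- and then leave it unresolved (``where the real work lies''). That is precisely the content of the claim about exceptional zeros, so the argument is a plan rather than a proof. A smaller but real issue is the formulation for the regular zeros: ``convergence of each regular zero to the corresponding classical zero'' is not meaningful, since the individual zeros of the classical families do not converge; the conjecture is about asymptotic distribution (equilibrium measure, or Mehler--Heine rescaling near the endpoints), and Hurwitz plus interlacing does not by itself deliver that.

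For comparison, the paper's actual quantitative result in the $X_1$ setting (Theorem \ref{JacobiExceptionalZeroTheo}) avoids Perron-type exterior asymptotics entirely. For $X_1$-Jacobi it rewrites \eqref{relacaoJacobi} as
$$
\widehat{P}_{n}^{(\alpha,\beta)}(x)=f_{1}(x)P_{n-1}^{(\alpha,\beta)}(x)-\frac{P_{n-2}^{(\alpha,\beta)}(x)}{2n-2+\alpha+\beta},
\qquad f_{1}(x)=-\tfrac12(x-b)+\tfrac{b}{2n-2+\alpha+\beta},
$$
and observes that the two terms have the same sign on $(\gamma_n b,c]$ with $\gamma_n=(2n+\alpha+\beta)/(2n-2+\alpha+\beta)$; this traps the exceptional zero in $(b,\gamma_n b)$, which gives both the rate $O(1/n)$ and, combined with \eqref{relacaoJacobi01} and a continuity argument, strict monotonicity. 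For $X_1$-Laguerre the containment in $[-\alpha-1,-\alpha)$ follows from the same kind of sign analysis of \eqref{relacaoLaguerre}, and monotonicity is transferred from the Jacobi case via the limit \eqref{JacobiLaguerreFormula}. If you want to salvage your approach, the cancellation you flagged is exactly what this sign-localization sidesteps: the zero of the linear factor $f_1$ (respectively of $x+\alpha+1$) gives an explicit barrier, so no subleading exterior asymptotics are needed.
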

The statement was partially proved in \textbf{\cite{ABJ}} for the class of Hermite exceptional polynomials introduced in \textbf{\cite{GoGraMi}}. The asymptotic results in Section 6.4 of \textbf{\cite{BonneuxArno}} provide
a firm background in favor of the conjecture for exceptional Laguerre polynomials. The Heine-Mehler type formula for $X_{m}$-Jacobi and $X_{m}$-Laguerre polynomials (see Proposition 3.4 and Proposition 5.6 in \textbf{\cite{Paco}}) leads to the asymptotic behavior of their zeros. Applying these formule to the  $X_{1}$-Jacobi and $X_{1}$-Laguerre polynomials immediately implies the convergence of the \textit{exceptional} zeros of $\widehat{L} _{n}^{(\alpha)}(x)$ and $\widehat{P} _{n}^{(\alpha , \beta )}(x)$ to $-\alpha$ and $b$, respectively.

%{\color{red} The \textit{exceptional} zeros, which stay outside the support of the measure, can be real or complex. The notable exemple is the type II exceptional Laguerre polynomials which the number of \textit{exceptional} zero in $(-\infty,0)$ is 0 or 1, according to whether $m$ is even or odd (see Proposition 4.4 and 4.5 of \textbf{\cite{Paco}}). }

Our result shows the precise behaviour of the sequences of exceptional zeros $\{ \widehat{x} _{n,n}^{(\alpha , \beta )} \}_{n=1}^{\infty} $ of $X_{1}$-Jacobi and $\{\widehat{x} _{n,1}^{(\alpha)}\}_{n=1}^{\infty}$ of $X_{1}$-Laguerre polynomials. We prove that the latter sequences converge monotonically to their limits $b$ and $-\alpha$ and $\widehat{x} _{n,n}^{(\alpha , \beta )}$ approaches $b$ with a speed at least $O(1/n)$.
\begin{theo}\label{JacobiExceptionalZeroTheo} Let $0<\alpha<\beta$. Then the exceptional zeros $\widehat{x} _{n,n}^{(\alpha , \beta )}$
of the $X_{1}$-Jacobi polynomials belong to $(b,c)$ and the sequence $\{\widehat{x} _{n,n}^{(\alpha , \beta )}\}_{n=1}^{\infty}$ is a strictly decreasing one, that is,
   \begin{equation}\label{zerosinequalities}
  b<\ldots<\widehat{x} _{n,n}^{(\alpha,\beta)}<\ldots<\widehat{x} _{2,2}^{(\alpha , \beta )}<\widehat{x} _{1,1}^{(\alpha , \beta )}=c.
  \end{equation}
Moreover,
\begin{equation}\label{limitantesZeroXJacobi}
  b<\widehat{x} _{n,n}^{(\alpha , \beta )}\leq \frac{2n+\alpha+\beta}{2n-2+\alpha+\beta} b.
  \end{equation}

The exceptional zeros $\widehat{x} _{n,1}^{(\alpha)}$ of the $X_{1}$-Laguerre polynomials belong to $(-\alpha-1,-\alpha)$ and the sequence
$\{\widehat{x} _{n,1}^{(\alpha)}\}_{n=1}^{\infty}$ is a strictly increasing one,
  \begin{equation}\label{LaguerreZerosInequalities}
  -(\alpha+1)=\widehat{x} _{1,1}^{(\alpha)}<\ldots<\widehat{x} _{n-1,1}^{(\alpha )}<\widehat{x} _{n,1}^{(\alpha )}<\ldots<-\alpha.
  \end{equation}
  \end{theo}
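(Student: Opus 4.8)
The plan is to convert every assertion into a sign computation for \emph{classical} Laguerre and Jacobi polynomials, using the explicit representations of the exceptional families in terms of their classical counterparts. Concretely I would start from $\widehat{L}_n^{(\alpha)}(x)=-(x+\alpha+1)L_{n-1}^{(\alpha)}(x)-\bigl(L_{n-1}^{(\alpha)}\bigr)'(x)$ (equivalently, with $-\bigl(L_{n-1}^{(\alpha)}\bigr)'=L_{n-2}^{(\alpha+1)}$) and the analogous representation of $\widehat{P}_n^{(\alpha,\beta)}$ as a linear combination of $P_{n-1}^{(\alpha,\beta)}$ and its derivative, i.e.\ of $P_{n-1}^{(\alpha,\beta)}$ and $P_{n-2}^{(\alpha+1,\beta+1)}$. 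The base cases $\widehat{L}_1^{(\alpha)}(x)=-(x+\alpha+1)$ and $\widehat{P}_1^{(\alpha,\beta)}(x)=x-c$ already pin down the initial exceptional zeros $-(\alpha+1)$ and $c$ occurring in \eqref{LaguerreZerosInequalities} and \eqref{zerosinequalities}, and fix the normalisation (hence the far-field sign of each $\widehat{L}_n^{(\alpha)}$, $\widehat{P}_n^{(\alpha,\beta)}$).

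To localise the single exceptional zero I would evaluate each polynomial at the two endpoints of the claimed interval. At the endpoint where the linear prefactor degenerates the sign is transparent: $\widehat{L}_n^{(\alpha)}(-\alpha-1)=L_{n-2}^{(\alpha+1)}(-\alpha-1)>0$ since a classical Laguerre polynomial with parameter $>-1$ is strictly positive on the negative axis, and the Jacobi endpoint $x=c$ is handled identically and yields the far-field sign. Since it is already known that each exceptional polynomial has exactly one zero outside the orthogonality interval, a single further evaluation at the \emph{limit} endpoint ($-\alpha$ for Laguerre, $b$ for Jacobi) completes the localisation. This limit-endpoint value is the first genuinely delicate point: the contiguous and derivative relations reduce $\widehat{L}_n^{(\alpha)}(-\alpha)$ to $\alpha^{-1}\bigl[(n-1-\alpha)L_{n-1}^{(\alpha)}(-\alpha)-(n-1+\alpha)L_{n-2}^{(\alpha)}(-\alpha)\bigr]$, whose negativity is not mere positivity but a true comparison of the ratio $L_{n-1}^{(\alpha)}(-\alpha)/L_{n-2}^{(\alpha)}(-\alpha)$ with $(n-1+\alpha)/(n-1-\alpha)$; I would prove it by showing the latter grows (in $\alpha$) at twice the rate of the former, and argue the Jacobi sign at $b$ the same way.

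For the monotonicity I would use an interlacing-by-sign argument. Because the exceptional zero is the largest zero of $\widehat{P}_n^{(\alpha,\beta)}$ and lies to the right of $b>1$, hence to the right of every regular zero, it suffices to compute the sign of $\widehat{P}_n^{(\alpha,\beta)}$ at the previous exceptional zero $\widehat{x}_{n-1,n-1}^{(\alpha,\beta)}$: if that sign agrees with the far-field sign, then $\widehat{x}_{n-1,n-1}^{(\alpha,\beta)}$ lies beyond the largest zero of $\widehat{P}_n^{(\alpha,\beta)}$, giving $\widehat{x}_{n,n}^{(\alpha,\beta)}<\widehat{x}_{n-1,n-1}^{(\alpha,\beta)}$ at once. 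Substituting $\widehat{P}_{n-1}^{(\alpha,\beta)}\bigl(\widehat{x}_{n-1,n-1}^{(\alpha,\beta)}\bigr)=0$ and removing the index mismatch between $P_{n-1}^{(\alpha,\beta)}$ and $P_{n-2}^{(\alpha,\beta)}$ through the classical three-term recurrence reduces this to one more sign of a classical Jacobi expression. The Laguerre case is identical with the smallest zero in the role of the extreme zero and $-(\alpha+1)=\widehat{x}_{1,1}^{(\alpha)}$ as the anchor, yielding the increasing chain \eqref{LaguerreZerosInequalities}.

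The quantitative estimate \eqref{limitantesZeroXJacobi} is where I expect the principal difficulty, because qualitative signs at convenient endpoints no longer suffice. I would evaluate $\widehat{P}_n^{(\alpha,\beta)}$ at the precise point $x^{\ast}=\tfrac{2n+\alpha+\beta}{2n-2+\alpha+\beta}\,b$ and show it already carries the far-field sign, forcing $\widehat{x}_{n,n}^{(\alpha,\beta)}\le x^{\ast}$. The appearance of $2n+\alpha+\beta$ strongly suggests the decisive ingredient is the normalisation identity $\bigl(P_{n-1}^{(\alpha,\beta)}\bigr)'=\tfrac{n+\alpha+\beta}{2}P_{n-2}^{(\alpha+1,\beta+1)}$, so that the crux becomes a \emph{sharp} inequality for the logarithmic derivative $\bigl(P_{n-1}^{(\alpha,\beta)}\bigr)'/P_{n-1}^{(\alpha,\beta)}$ evaluated just to the right of $b$. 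Controlling this ratio uniformly in $n$, rather than only asymptotically, is the main obstacle; the factor $(2n-2+\alpha+\beta)^{-1}$ is exactly the slack such an inequality leaves, which is what produces the claimed $O(1/n)$ rate of convergence to $b$.
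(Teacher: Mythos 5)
There is a genuine gap, and it sits exactly at the central claim. Your proof of the strict monotonicity \eqref{zerosinequalities} hinges on determining the sign of $\widehat{P}_{n}^{(\alpha,\beta)}$ at the previous exceptional zero $\xi:=\widehat{x}_{n-1,n-1}^{(\alpha,\beta)}$, which you defer to ``one more sign of a classical Jacobi expression'' after removing the index mismatch via the three-term recurrence. That is precisely the step that does not go through: substituting $\widehat{P}_{n-1}^{(\alpha,\beta)}(\xi)=0$ into \eqref{relacaoJacobi} leaves the sign depending on the ratio $P_{n-2}^{(\alpha,\beta)}(\xi)/P_{n-1}^{(\alpha,\beta)}(\xi)$ at an unknown point $\xi>1$, and no uniform control of this ratio for general $\alpha,\beta$ is offered or obvious. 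The paper does not determine this sign either; it proves \eqref{zerosinequalities} in two stages: (i) for $\alpha$ and $\beta$ sufficiently close, the points $\gamma_{n}b$ with $\gamma_{n}=(2n+\alpha+\beta)/(2n-2+\alpha+\beta)$ separate consecutive exceptional zeros because $\widehat{P}_{n}^{(\alpha,\beta)}(\gamma_{n+1}b)>0$ (this is where Lemma \ref{JacobiInequality} enters); (ii) the strict chain then persists for all admissible parameters by continuity, since a collision $\widehat{x}_{k+1,k+1}^{(\alpha,\beta)}=\widehat{x}_{k,k}^{(\alpha,\beta)}=\xi$ would force, via the quadratic relation \eqref{relacaoJacobi01}, the contradiction $0>-\tfrac14(\xi-b)^{2}P_{k-1}^{(\alpha,\beta)}(\xi)=h_{k-1}\widehat{P}_{k-1}^{(\alpha,\beta)}(\xi)>0$. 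Your outline supplies a substitute for neither stage. The gap propagates to \eqref{LaguerreZerosInequalities}, which the paper deduces from \eqref{zerosinequalities} through the limit \eqref{JacobiLaguerreFormula} of Theorem \ref{JacobiLaguerreConvergence}, not by the ``identical'' direct Laguerre computation you invoke.

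Conversely, you locate the ``principal difficulty'' in \eqref{limitantesZeroXJacobi} and propose a sharp, $n$-uniform bound on the logarithmic derivative of $P_{n-1}^{(\alpha,\beta)}$ just to the right of $b$. That is a wrong turn: $x^{\ast}=\gamma_{n}b$ is exactly the zero of the linear coefficient $-\tfrac12(x-b)+b/(2n-2+\alpha+\beta)$ multiplying $P_{n-1}^{(\alpha,\beta)}$ in \eqref{relacaoJacobi}, so $\widehat{P}_{n}^{(\alpha,\beta)}(\gamma_{n}b)=-P_{n-2}^{(\alpha,\beta)}(\gamma_{n}b)/(2n-2+\alpha+\beta)<0$ is a one-line computation already carrying the far-field sign; the paper obtains the bound from precisely this sign-splitting of the two terms on $(b,\gamma_{n}b)$ versus $(\gamma_{n}b,c]$. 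Two smaller cautions. First, your starting identity $\widehat{L}_{n}^{(\alpha)}=-(x+\alpha+1)L_{n-1}^{(\alpha)}-\bigl(L_{n-1}^{(\alpha)}\bigr)'$ is not the polynomial in \eqref{relacaoLaguerre}, since $-\bigl(L_{n-1}^{(\alpha)}\bigr)'=L_{n-2}^{(\alpha+1)}\neq L_{n-2}^{(\alpha)}$ (for $n=3$ the two candidates differ by a nonzero constant), so one of the two formulas must be discarded. Second, the ``delicate'' negativity of $\widehat{L}_{n}^{(\alpha)}(-\alpha)$ that you plan to prove by a growth-rate comparison is both unsubstantiated as sketched and unnecessary: the paper cites \cite{GoKaMi09} for $\widehat{x}_{n,1}^{(\alpha)}<-\alpha$ and obtains the lower bound $-\alpha-1$ from the observation that the two terms of \eqref{relacaoLaguerre} have the same sign on $(-\infty,-\alpha-1)$.
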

Note that in the case $0<\beta<\alpha$, the statement is similar; one needs only to invert the inequalities.

It was shown In \textbf{\cite{Yen}}  that the exceptional zero of the $X_{1}$-Jacobi polynomial $\widehat{P}_{n}^{(\alpha , \beta )}(x)$ is in $(b,c]$ when $\beta > \alpha$ (or in $[c,b)$ when $\beta < \alpha$), that the
regular zeros of $X_{1}$-Jacobi XOPS of consecutive degrees obey the interlacing property as well as that the regular zeros are increasing functions of $\beta$ and decreasing functions of $\alpha$ under certain conditions for these parameters.
Here we establish the analogous  properties of the zeros of $X_{1}$-Laguerre polynomials
\begin{theo}\label{InterlacingMonotZerosLaguerre}
%Let $\{\widehat{L}_{n}^{(\alpha)}(x)\}_{n=1}^{\infty}$ be the $X_1$-Laguerre XOPS. Then, the regular zeros of polynomials of consecutive degrees interlace. In the other words,
%$$
%\widehat{x}_{n+1,2}^{(\alpha)}<\widehat{x}_{n,2}^{(\alpha)}<\widehat{x}_{n+1,3}^{(\alpha)}<\ldots<\widehat{x}_{n+1,n}^{(\alpha)}<\widehat{x}_{n,n}^{(\alpha)}<\widehat{x}_{n+1,n+1}^{(\alpha)}.
%$$
%Moreover, the regular zeros are increasing functions of $\alpha$.
Let the regular zeros $\widehat{x} _{n,k}^{(\alpha)}$, $k=1,\ldots,n$, of $\widehat{L}_{n}^{(\alpha)}(x)$ be arranged in an increasing order. Then the regular zeros $\widehat{x} _{n,k}^{(\alpha)}$, $k=2,\ldots,n$
are increasing functions of $\alpha$. Moreover, the regular zeros of polynomials of consecutive degrees interlace:
$$
0<\widehat{x}_{n+1,2}^{(\alpha)}<\widehat{x}_{n,2}^{(\alpha)}<\widehat{x}_{n+1,3}^{(\alpha)}<\ldots<\widehat{x}_{n+1,n}^{(\alpha)}<\widehat{x}_{n,n}^{(\alpha)}<\widehat{x}_{n+1,n+1}^{(\alpha)}.
$$
\end{theo}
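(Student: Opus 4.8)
The plan is to reduce everything to the classical Laguerre polynomials $L_m^{(\alpha)}$ through the explicit representation
\[
\widehat{L}_n^{(\alpha)}(x) = (x+\alpha+1)\,L_{n-1}^{(\alpha)}(x) - L_{n-2}^{(\alpha)}(x)
\]
(valid up to a nonzero normalizing constant, which is irrelevant for the zeros), and then to exploit three classical facts: the interlacing of the zeros of $L_{m-1}^{(\alpha)}$ and $L_m^{(\alpha)}$, the Turán inequality $\Delta_m(x):=\big[L_m^{(\alpha)}(x)\big]^2-L_{m-1}^{(\alpha)}(x)L_{m+1}^{(\alpha)}(x)\ge 0$ on $[0,\infty)$, and the monotonicity of the classical zeros in $\alpha$. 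I would also keep in mind that on $(0,\infty)$ the $\widehat L_n^{(\alpha)}$ are the eigenfunctions of a bona fide (singular) Sturm–Liouville problem with the positive weight $w_\alpha(x)=e^{-x}x^\alpha/(x+\alpha)^2$, the exceptional zero sitting in $(-\alpha-1,-\alpha)$ outside the interval; this viewpoint is what ultimately separates the behaviour of the regular zeros from that of the exceptional one.

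For the interlacing I would first show that the regular zeros of $\widehat L_n^{(\alpha)}$ interlace with the zeros of $L_{n-1}^{(\alpha)}$. Indeed, at a zero $x$ of $L_{n-1}^{(\alpha)}$ the representation collapses to $\widehat L_n^{(\alpha)}(x)=-L_{n-2}^{(\alpha)}(x)$, which changes sign from one zero of $L_{n-1}^{(\alpha)}$ to the next by classical interlacing; this produces $n-2$ regular zeros in the interior gaps of $L_{n-1}^{(\alpha)}$, and a short boundary count at $0^+$ and $+\infty$ locates the last one just beyond an extreme zero. Next I would use the identity
\[
\widehat L_n^{(\alpha)}(x)\,L_n^{(\alpha)}(x)-\widehat L_{n+1}^{(\alpha)}(x)\,L_{n-1}^{(\alpha)}(x)=\big[L_{n-1}^{(\alpha)}(x)\big]^2-L_{n-2}^{(\alpha)}(x)L_n^{(\alpha)}(x)=\Delta_{n-1}(x),
\]
which follows at once from the representation. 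Evaluated at a regular zero it gives $\widehat L_{n+1}^{(\alpha)}(\widehat x_{n,k}^{(\alpha)})\,L_{n-1}^{(\alpha)}(\widehat x_{n,k}^{(\alpha)})=-\Delta_{n-1}(\widehat x_{n,k}^{(\alpha)})<0$ by the strict Turán inequality on $(0,\infty)$. Since $L_{n-1}^{(\alpha)}$ alternates in sign across the regular zeros of $\widehat L_n^{(\alpha)}$ (by the interlacing just established), so does $\widehat L_{n+1}^{(\alpha)}$, forcing a zero of $\widehat L_{n+1}^{(\alpha)}$ strictly between each consecutive pair of regular zeros of $\widehat L_n^{(\alpha)}$. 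A final counting argument, using the sign of $\widehat L_{n+1}^{(\alpha)}$ at $0^+$ and the sign of its leading coefficient, places the two remaining zeros of $\widehat L_{n+1}^{(\alpha)}$ on either side and yields the displayed chain.

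For the monotonicity in $\alpha$ I would differentiate $\widehat L_n^{(\alpha)}\big(\widehat x_{n,k}^{(\alpha)}\big)=0$ implicitly, reducing the claim to the statement that $\partial_\alpha \widehat L_n^{(\alpha)}$ and $\big(\widehat L_n^{(\alpha)}\big)'$ have opposite signs at every regular zero, so that $d\widehat x_{n,k}^{(\alpha)}/d\alpha>0$. To fix this sign I would argue intrinsically on $(0,\infty)$, where each regular zero is a genuine interior zero of a Sturm–Liouville eigenfunction: the Sturmian formula for the displacement of such a zero under a parameter expresses $d\widehat x_{n,k}^{(\alpha)}/d\alpha$ as a $y^2$-weighted integral of the $\alpha$-derivatives of the coefficients of the self-adjoint equation, and the decisive input is that
\[
\partial_\alpha \log w_\alpha(x)=\log x-\frac{2}{x+\alpha}
\]
is strictly increasing on $(0,\infty)$, since $\tfrac{d}{dx}\big(\log x-\tfrac{2}{x+\alpha}\big)=\tfrac1x+\tfrac{2}{(x+\alpha)^2}>0$; this is precisely the hypothesis of Markov's monotonicity theorem.

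The main obstacle is exactly this monotonicity step, and it is a genuine one rather than a bookkeeping nuisance: the family $\{\widehat L_n^{(\alpha)}\}_{n\ge 1}$ is not a classical orthogonal polynomial sequence (it omits degree $0$), so Markov's theorem does not apply verbatim; applied naively it would even predict that the exceptional zero increases with $\alpha$, which is false, as that zero tracks $-\alpha$. The resolution must make the regular/exceptional distinction explicit, i.e. run the argument purely on $(0,\infty)$ via the second order equation satisfied by $\widehat L_n^{(\alpha)}$ (or, equivalently, via a Gauss-type quadrature supported on the $n-1$ regular nodes), and then check that the \emph{combined} sign of all the $\alpha$-derivatives entering the Sturmian displacement formula---coming not only from $w_\alpha$ but also from the remaining coefficients of the self-adjoint equation and from the $\alpha$-dependence of the eigenvalue---does not destroy the positivity furnished by $\partial_\alpha\log w_\alpha$. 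Verifying that combined sign, together with the elementary boundary bookkeeping that closes the interlacing chain and the confirmation of the strict Turán inequality for the index $n-1$, is where essentially all the work lies.
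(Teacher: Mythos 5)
Your interlacing argument takes a genuinely different and essentially viable route from the paper's: the paper simply passes to the Sturm--Liouville form \eqref{ODEu} and observes that $\lambda_{n+1,4}(x)-\lambda_{n,4}(x)=1/x>0$ on $(0,\infty)$, so the classical Sturm comparison theorem gives the interlacing in one line, whereas you work from the representation \eqref{relacaoLaguerre} plus the Tur\'an inequality for Laguerre polynomials. Your identity $\widehat L_n^{(\alpha)}L_n^{(\alpha)}-\widehat L_{n+1}^{(\alpha)}L_{n-1}^{(\alpha)}=[L_{n-1}^{(\alpha)}]^2-L_{n-2}^{(\alpha)}L_n^{(\alpha)}$ is correct, the required strict Tur\'an inequality on $(0,\infty)$ for $\alpha>0$ is classical, and the boundary bookkeeping closes. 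This buys elementarity at the cost of an external input; the paper's route is shorter but needs the quartic coefficient $\lambda_{n,4}$.

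The monotonicity part, however, has a genuine gap, and it sits exactly where you say "essentially all the work lies." Your plan is to verify that the combined $\alpha$-derivative of the coefficient in the self-adjoint equation is positive on $(0,\infty)$, with the monotonicity of $\partial_\alpha\log w_\alpha(x)=\log x-2/(x+\alpha)$ as the driving heuristic. That verification \emph{fails}: the paper computes
$$
\frac{\partial\lambda_{n,4}(x)}{\partial\alpha}=\frac{x^{4}+2\alpha x^{3}+6x^{2}-2\alpha(1+\alpha^{2})x-\alpha^{4}}{2x^{2}(x+\alpha)^{3}},
$$
whose numerator equals $-\alpha^{4}<0$ at $x=0$, so the perturbation tends to $-\infty$ as $x\to0^{+}$ and is strictly negative on an initial subinterval of $(0,\infty)$ before becoming positive. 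Hence there is no uniform positivity to preserve, the classical Sturm comparison (and any Markov-type displacement formula requiring a single-signed perturbation) is inapplicable, and a naive integral estimate does not determine the sign of $d\widehat x_{n,k}^{(\alpha)}/d\alpha$ either. The missing idea is the refined Sturm comparison theorem of Dimitrov (Theorem~\ref{SturmTheoNew} in the paper), which concludes monotonicity of \emph{all} zeros when the perturbation changes sign exactly once, from negative to positive; the paper then uses Descartes' rule of signs on the coefficient sequence $(1,\,2\alpha,\,6,\,-2\alpha(1+\alpha^{2}),\,-\alpha^{4})$, which has exactly one sign change, to certify that single crossing. Without this tool (or an equivalent substitute), your outline does not yield the monotonicity statement.
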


Except for a refinement of the Sturm comparison theorem, due to Dimitrov \textbf{\cite{DKDLate}}, basic tools in the proofs are limiting relations between the zeros of  $X_{1}$-Jacobi, $X_{1}$-Laguerre and Hermite polynomials which might be of independent interest. In the setting for the zeros $x^{(\alpha,\beta)}_{n,k}$, $x^{(\alpha)}_{n,k}$  and $h_{n,k}$
of the corresponding classical Jacobi, Laguerre and Hermite polynomials these relations read as follows.  The one between the zeros of Jacobi and Laguerre polynomials is (see \textbf{\cite{szego}} and \textbf{\cite{Eliel}}  for a recent refinement)
$$
\lim_{\beta\rightarrow\infty}\frac{\beta\big(1-x^{(\alpha,\beta)}_{n,k}\big)}{2}=x^{(\alpha)}_{n,n+1-k},
$$
and the analog for the zeros of Laguerre and Hermite polynomials
$$
 \lim_{\alpha\to\infty}\frac{x_{n,k}^{(\alpha)}-\alpha}{\sqrt{2\alpha}}=h_{n,k}
$$
is due to F.~Calogero \textbf{\cite{Calogero}}. The analogs for the zeros of XOPS is the following one:

\begin{theo}\label{JacobiLaguerreConvergence}
Let $\alpha>0$ and $n,k \in \mathbb{N}$ with $1\leq k \leq n$ be fixed. Then
\begin{equation}\label{JacobiLaguerreFormula}
\lim_{\beta\rightarrow\infty}\frac{\beta\big(1-\widehat{x}^{(\alpha,\beta)}_{n,k}\big)}{2}=\widehat{x}^{(\alpha)}_{n,n+1-k}.
\end{equation}

Similarly, if $n,k \in \mathbb{N}$ with $2\leq k \leq n$ are fixed, then
   \begin{equation}\label{LimX1LaguerreZeroToHermite}
   \lim_{\alpha\to\infty}\frac{\widehat{x}_{n,k}^{(\alpha)}-\alpha}{\sqrt{2\alpha}}=h_{n-1,k-1}
   \end{equation}
\end{theo}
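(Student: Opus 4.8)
The plan is to establish both limits first at the level of the polynomials and then transfer the conclusion to the zeros by Hurwitz's theorem, precisely as in the classical arguments of Szeg\H{o} and Calogero. The key additional ingredient is the explicit representation of each $X_1$ family as a linear combination of its classical counterparts: in a suitable normalization one has $\widehat{L}_n^{(\alpha)}(x)=-(x+\alpha+1)L_{n-1}^{(\alpha)}(x)+L_{n-2}^{(\alpha)}(x)$, and $\widehat{P}_n^{(\alpha,\beta)}$ admits an analogous expression as a combination of $P_{n-1}^{(\alpha,\beta)}$ and $P_{n-2}^{(\alpha,\beta)}$ whose leading coefficient is linear in $x$ and reduces to $x-c$ when $n=1$. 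Since the classical polynomial relations
$$\lim_{\beta\to\infty}P_m^{(\alpha,\beta)}\Big(1-\tfrac{2t}{\beta}\Big)=L_m^{(\alpha)}(t), \qquad \lim_{\alpha\to\infty}\Big(\tfrac{2}{\alpha}\Big)^{m/2}L_m^{(\alpha)}\big(\alpha+\sqrt{2\alpha}\,t\big)=\tfrac{(-1)^m}{m!}H_m(t)$$
hold locally uniformly in $t\in\mathbb{C}$, the strategy is to insert the appropriate scaling into these representations and pass to the limit term by term.

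For \eqref{JacobiLaguerreFormula} I would set $x=1-2t/\beta$ in the representation of $\widehat{P}_n^{(\alpha,\beta)}$ and use the classical Jacobi-to-Laguerre limit above. The mechanism is already transparent for $n=1$: writing $c=1+2(\alpha+1)/(\beta-\alpha)$ one finds $\widehat{P}_1^{(\alpha,\beta)}(1-2t/\beta)=-(2/\beta)\,(t+\alpha+1)+o(1/\beta)$, and $t+\alpha+1=\widehat{L}_1^{(\alpha)}(t)$ in monic form. After multiplying by the correct fixed power of $\beta$, the rescaled polynomial then converges locally uniformly to a nonzero multiple of $\widehat{L}_n^{(\alpha)}(t)$; the precise constant is immaterial for locating the zeros. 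Here there is no loss of degree, since a polynomial of degree $n$ in $x$ remains of degree $n$ in $t$, so all $n$ zeros are accounted for, including the exceptional one. Because $t=\beta(1-x)/2$ is a decreasing change of variable, Hurwitz's theorem pairs the zeros in reversed order, giving exactly $\beta(1-\widehat{x}_{n,k}^{(\alpha,\beta)})/2\to\widehat{x}_{n,n+1-k}^{(\alpha)}$.

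The relation \eqref{LimX1LaguerreZeroToHermite} is the more delicate one, and this is where I expect the main obstacle. Substituting $x=\alpha+\sqrt{2\alpha}\,t$ into $\widehat{L}_n^{(\alpha)}(x)=-(x+\alpha+1)L_{n-1}^{(\alpha)}(x)+L_{n-2}^{(\alpha)}(x)$ turns the linear factor into $2\alpha+1+\sqrt{2\alpha}\,t$, so by the Laguerre-to-Hermite asymptotics the first term is of order $\alpha^{(n+1)/2}$ while the second is only of order $\alpha^{(n-2)/2}$. Hence, after dividing by the leading order, the normalized polynomials converge to a nonzero constant times $H_{n-1}(t)$ rather than to a polynomial of degree $n$: the scaling lowers the visible degree by one, which reflects the exceptional zero escaping under the change of variable. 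The care needed is precisely in the Hurwitz step with this degree drop. I would fix a compact set $K$ containing all the simple zeros $h_{n-1,1}<\cdots<h_{n-1,n-1}$ of $H_{n-1}$, observe that by Theorem~\ref{JacobiExceptionalZeroTheo} the exceptional zero lies in $(-\alpha-1,-\alpha)$, so its image under $x\mapsto(x-\alpha)/\sqrt{2\alpha}$ tends to $-\infty$ and leaves $K$ for large $\alpha$, and then apply Hurwitz inside $K$ to conclude that exactly the $n-1$ regular zeros converge to the $n-1$ Hermite zeros. Since $(x-\alpha)/\sqrt{2\alpha}$ is increasing, the indices match in order as $\widehat{x}_{n,k}^{(\alpha)}\mapsto h_{n-1,k-1}$ for $2\le k\le n$, which is the asserted formula.
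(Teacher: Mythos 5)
Your proposal is correct and follows essentially the same route as the paper: both limits are obtained by inserting the classical scalings into the decompositions \eqref{relacaoJacobi} and \eqref{relacaoLaguerre}, invoking the Szeg\H{o} (Jacobi-to-Laguerre) and Calogero (Laguerre-to-Hermite) polynomial limits, and using the localization $\widehat{x}_{n,1}^{(\alpha)}\in(-\alpha-1,-\alpha)$ from Theorem~\ref{JacobiExceptionalZeroTheo} to control the degree drop in \eqref{LimX1LaguerreZeroToHermite}. The only cosmetic difference is in the Hurwitz bookkeeping for that drop: the paper divides $\widehat{L}_n^{(\alpha)}(y)$ by the exceptional linear factor $y-\widehat{x}_{n,1}^{(\alpha)}$ so as to work with a degree-$(n-1)$ polynomial throughout, whereas you keep the full polynomial and observe that the rescaled exceptional zero escapes to $-\infty$, leaving every compact set.
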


%\textcolor{red}{

\section{Preliminaries}

 We begin this section recalling a basic fact about relation between the classical  Laguerre orthogonal polynomials and the $X_{1}$-Laguerre XOPS, proved in
 \textbf{\cite{GoKaMi09}},
\begin{equation}\label{relacaoLaguerre}
\widehat{L}^{(\alpha)}_{n}(x)=-(x+\alpha+1)L_{n-1}^{(\alpha)}(x)+L_{n-2}^{(\alpha)}(x),
\end{equation}
and that the $n$-th $X_{1}$-Laguerre XOP is a solution of the differential equation
\begin{equation}\label{eqLaguerre}
x(x+\alpha)y''-(x-\alpha )(x+\alpha +1)y'+ [n x +(n-2)\alpha]y=0,\ \ \ y(x)= \widehat{L}^{(\alpha)}_{n}(x).
\end{equation}

%\textcolor{red}{According to the classification established in Section 3 of \textbf{\cite{Paco}} and Section 5.2 of \textbf{\cite{BonneuxArno}}, the above $X_{1}$-Laguerre polynomials is a particular case of the so-called type I $X_{m}$-Laguerre polynomials.}

We shall need also the following relations between the classical and the $X_{1}$-Jacobi polynomials, proved in \textbf{\cite{GoKaMi10}}:
\begin{equation}\label{relacaoJacobi}
\widehat{P} _{n}^{(\alpha , \beta )}(x)=-\frac{1}{2}(x-b)P_{n-1}^{(\alpha , \beta )}(x)+\frac{b P _{n-1}^{(\alpha , \beta )}(x)-P _{n-2}^{(\alpha , \beta )}(x)  }{2n-2+\alpha+\beta}
\end{equation}
and
\begin{equation}\label{relacaoJacobi01}
-\frac{1}{4}(x-b)^{2}P_{n}^{(\alpha , \beta )}(x)=f_{n+1}\widehat{P} _{n+2}^{(\alpha , \beta )}(x)-2b \ g_{n}\widehat{P} _{n+1}^{(\alpha , \beta )}(x)+h_{n}\widehat{P} _{n}^{(\alpha , \beta )}(x),
\end{equation}
where
$$
f_{n}=\frac{n(n+\alpha+\beta)}{(2n-1+\alpha+\beta)(2n+\alpha+\beta)},
$$
$$
g_{n}=\frac{(n+\beta)(n+\alpha)}{(2n+2+\alpha+\beta)(2n+\alpha+\beta)},
$$
$$
h_{n}=\frac{(n-1+\beta)(n-1+\alpha)}{(2n+\alpha+\beta)(2n+1+\alpha+\beta)}.
$$

Recall now the classical Sturm Comparison Theorem in Chapter 6 of Szeg\H{o}'s book \cite{szego}:
\begin{THEO}[Sturm's Comparison Theorem]\label{SturmTheoClassic}
Let
$y(x)$ and $Y(x)$ be solutions of the differential equations
\begin{equation*}
\label{y}
y''(x) + f(x) y(x) =0
\end{equation*}
and
\begin{equation*}
\label{Y}
Y''(x) + F(x) Y(x) =0,
\end{equation*}
where $f, F \in C(r,s)$ and $f(x)\leq F(x)$ in $(r,s)$. Let $x_1$ and $x_2$, with $r< x_1 < x_2 < s$ be two
consecutive zeros of $y(x)$. Then the function $Y(x)$ has at least one variation of sign in the interval
$(x_1, x_2)$ provided that $f\not\equiv F$ in $(x_1, x_2)$. The statement also holds:
\begin{itemize}
\item for $x_1=r$ if
\begin{equation}\label{a}
y(r+0)=0 \ \ \mbox{and} \ \ \lim_{x\rightarrow r_{+}} \{y^{\prime}(x) Y(x) - y(x) Y^{\prime}(x) \} =0;
\end{equation}
\item for $x_2=s$ if
\begin{equation}\label{b}
y(s-0)=0\ \ \mbox{and} \ \ \lim_{x\rightarrow s_{-}} \{y^{\prime}(x) Y(x) - y(x) Y^{\prime}(x) \} =0.
\end{equation}
\end{itemize}
\end{THEO}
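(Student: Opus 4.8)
The plan is to run the classical Wronskian argument adapted to the boundary data built into the statement. First I would introduce the bilinear quantity
\begin{equation*}
w(x) := y'(x)Y(x) - y(x)Y'(x),
\end{equation*}
which is exactly the expression appearing in the limit hypotheses \eqref{a} and \eqref{b}. Differentiating and substituting $y''=-fy$ and $Y''=-FY$ from the two equations gives
\begin{equation*}
w'(x) = y''(x)Y(x) - y(x)Y''(x) = \big(F(x)-f(x)\big)\,y(x)Y(x),
\end{equation*}
so $w'$ has a definite sign on any subinterval where $yY$ keeps a constant sign, since $F-f\geq 0$ by hypothesis. This identity is the engine of the whole proof.

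Next I would argue by contradiction, assuming that $Y$ has no variation of sign on $(x_1,x_2)$. After normalizing signs (replacing $y$ by $-y$ and/or $Y$ by $-Y$), I may take $y>0$ on $(x_1,x_2)$ and $Y\geq 0$ there; I would also record that $Y\not\equiv 0$, since otherwise the asserted conclusion is false. Integrating the identity for $w'$ over $[x_1,x_2]$ yields
\begin{equation*}
w(x_2)-w(x_1) = \int_{x_1}^{x_2}\big(F(x)-f(x)\big)\,y(x)Y(x)\,dx \;\geq\; 0 .
\end{equation*}
At the interior consecutive zero $x_1$ one has $y(x_1)=0$ and $y'(x_1)>0$ (strictly, by the uniqueness theorem for the linear equation, since $y\not\equiv 0$), so $w(x_1)=y'(x_1)Y(x_1)\geq 0$; symmetrically $y'(x_2)<0$ forces $w(x_2)\leq 0$. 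These sign constraints together with the displayed inequality pin down $w(x_1)=w(x_2)=0$ and make the integral vanish. Then the nonnegative continuous integrand $(F-f)yY$ must vanish identically; as $y>0$, this gives $(F-f)Y\equiv 0$ on $(x_1,x_2)$. Using $f\not\equiv F$ and continuity, there is a subinterval on which $F>f$, where necessarily $Y\equiv 0$; but a nontrivial solution of a second-order linear equation cannot vanish on an interval, so $Y\equiv 0$, contradicting $Y\not\equiv 0$. Hence $Y$ changes sign in $(x_1,x_2)$.

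Finally I would treat the two endpoint cases by replacing the boundary evaluation of $w$ with the prescribed limits. When $x_1=r$, condition \eqref{a} states precisely that $\lim_{x\to r^{+}}w(x)=0$, which supplies $w(r^{+})=0$ in place of the inequality $w(x_1)\geq 0$; one then integrates $w'$ over $(r,x_2]$ as an improper integral, whose convergence to $w(x_2)$ is guaranteed by the existence of this very limit, and repeats the sign bookkeeping to reach the same contradiction. The case $x_2=s$ is entirely symmetric via \eqref{b}. I expect the only delicate point to be exactly these endpoint cases, where $r$ or $s$ may be a singular end of the interval so that the integral of $w'$ is genuinely improper; it is the vanishing-Wronskian hypotheses \eqref{a} and \eqref{b} that legitimize passing to the limit and thereby make the comparison argument go through at the boundary as well as in the interior.
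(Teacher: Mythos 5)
Your argument is correct: the paper states this theorem without proof, quoting it from Chapter 1 of Szeg\H{o}'s book, and your Wronskian argument --- the identity $w'=(F-f)yY$ for $w=y'Y-yY'$, integration between consecutive zeros, and sign bookkeeping at the endpoints --- is precisely the classical proof given in that reference, including the correct use of the limit hypotheses \eqref{a} and \eqref{b} (together with the monotonicity of $w$ under the contradiction hypothesis) to justify the improper integration at a singular endpoint. No gaps; the normalization $Y\geq 0$, the strictness $y'(x_1)>0$, and the final step ruling out $Y\equiv 0$ on a subinterval where $F>f$ are all handled properly.
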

%The above theorem is called discrete Sturm comparison theorem. Discrete in the sense that the functions $F$, $f$ and the solutions don't depend on a parameter. Otherwise, as %presented in \eqref{eq dif s-d}, it is called continuous Sturm comparison theorem.
Note that in the above mentioned Sturm comparison theorem, which we call the classical version, the function $F-f$ does not change of sign in $(r,s)$. Recently D. K. Dimitrov  \textbf{\cite{DKDLate}} proved the following refined version of the classical one where the difference $F-f$ is allowed to change sign only once in the interval $(r,s)$. This modification turned out to work in situations where the classical one does not (see \textbf{\cite{Dimitrov, Driver, Rafaeli}}) and we shall employ it again.
\begin{THEO}[Refined Sturm's Comparison Theorem]\label{SturmTheoNew}
Let $y(x;\tau)$ be the solution of the differential equation
\begin{equation}\label{eq dif s-d}
y^{\prime\prime}(x;\tau)+f(x;\tau) y(x;\tau)=0
\end{equation}
which depends on a parameter $\tau$, where the differentiation is with respect to the variable $x$. Suppose that $f\in\mathrm{C}[(r,s)\times(c,d)]$ and, for every $\tau\in(c,d)$, the solution of \eqref{eq dif s-d} satisfies
\begin{equation}\label{liminextreme}
\lim_{x\rightarrow r_{+}} y(x;\tau)=0 \ \ \mbox{and} \ \ \lim_{x\rightarrow s_{-}} y(x;\tau)=0
\end{equation}
and has $n$ distinct zeros $x_{1}(\tau)<\ldots<x_{n}(\tau)$ in $(a,b)$. Given $\tau_{1},\tau_{2}\in(c,d)$, suppose that $y(x;\tau_{1})$ and $y(x;\tau_{2})$ satisfy the condition \eqref{liminextreme} described above,
$$
\lim_{x\rightarrow r_{+}} \{ y^{\prime}(x;\tau_{1})\cdot y(x;\tau_{2}) - y^{\prime}(x;\tau_{2})\cdot y(x;\tau_{1}) \} =0
$$
and
$$
\lim_{x\rightarrow s_{-}} \{ y^{\prime}(x;\tau_{1})\cdot y(x;\tau_{2}) - y^{\prime}(x;\tau_{2})\cdot y(x;\tau_{1}) \} =0.
$$
If $\eta\in(r,s)$ exists such that $f(\eta;\tau_1)=f(\eta;\tau_2)$ and
\begin{itemize}
\item $f(x;\tau_{2})-f(x;\tau_{1})<0$ for $x\in(r,\eta)$, $f(x;\tau_{2})-f(x;\tau_{1})>0$ for $x\in(\eta,s)$, then $x_{k}(\tau_{1})<x_{k}(\tau_{2})$ for every $k=1,\ldots,n$;
\item $f(x;\tau_{2})-f(x;\tau_{1})>0$ for $x\in(r,\eta)$, $f(x;\tau_{2})-f(x;\tau_{1})<0$ for $x\in(\eta,s)$, then $x_{k}(\tau_{1})>x_{k}(\tau_{2})$ for every $k=1,\ldots,n$.
\end{itemize}
\end{THEO}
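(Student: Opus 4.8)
The plan is to reduce the statement to the classical Sturm Comparison Theorem (Theorem~\ref{SturmTheoClassic}) applied separately on the two subintervals $(r,\eta)$ and $(\eta,s)$, on each of which the difference $f(\cdot;\tau_2)-f(\cdot;\tau_1)$ has a single fixed sign. Write $u(x)=y(x;\tau_1)$ and $v(x)=y(x;\tau_2)$, and first record the elementary translation of the desired conclusion into a statement about oscillation: for the ordering $x_k(\tau_1)<x_k(\tau_2)$ to hold for every $k$ it suffices that, at each $x\in(r,s)$, the number of zeros of $u$ in $(r,x]$ is at least the number of zeros of $v$ in $(r,x]$. Equivalently, writing $\theta_1,\theta_2$ for the Prüfer angles normalized by $\theta_i(r^+)=0$ (so that the interior zeros are exactly the points where $\theta_i$ meets a multiple of $\pi$, and both angles run from $0$ at $r$ to $(n+1)\pi$ at $s$), it suffices that $\theta_1(x)\ge\theta_2(x)$ throughout $(r,s)$. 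Thus the whole theorem reduces to this single global oscillation inequality.

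On $(r,\eta)$ one has $f(\cdot;\tau_1)\ge f(\cdot;\tau_2)$ by the first sign hypothesis, so I would apply Theorem~\ref{SturmTheoClassic} with $f=f(\cdot;\tau_2)$, $F=f(\cdot;\tau_1)$, anchoring the comparison at the left endpoint: the hypotheses \eqref{liminextreme} together with the vanishing of the Wronskian limit at $r$ are precisely condition \eqref{a}, which lets $r$ play the role of a common left zero. The conclusion is that $u$ oscillates at least as fast as $v$ when measured from $r$, i.e. $\theta_1\ge\theta_2$ on $(r,\eta]$.

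The delicate point, and the heart of the refinement, is the subinterval $(\eta,s)$, where the inequality between the potentials is \emph{reversed}: now $f(\cdot;\tau_2)\ge f(\cdot;\tau_1)$, so a naive left-anchored comparison would push the zeros of $v$ to the left of those of $u$ and would contradict the desired ordering. The resolution is to anchor the comparison at the \emph{right} endpoint instead: applying Theorem~\ref{SturmTheoClassic} on $(\eta,s)$ with $s$ as the common right zero—legitimate exactly because $u(s^-)=v(s^-)=0$ and the Wronskian limit at $s$ vanishes, i.e. condition \eqref{b} holds—shows that $v$ now oscillates at least as fast as $u$ \emph{measured from $s$}. Counting zeros from the right and converting back, this says that the excess oscillation of $v$ is absorbed in the last hump near $s$, so that the inequality $\theta_1\ge\theta_2$ is \emph{preserved} on $[\eta,s)$ rather than reversed. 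This is where both endpoint (Wronskian) conditions do their essential work, and where the single sign change of $f(\cdot;\tau_2)-f(\cdot;\tau_1)$ at $\eta$ is used.

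Combining the two subintervals gives $\theta_1\ge\theta_2$ on all of $(r,s)$, hence $x_k(\tau_1)\le x_k(\tau_2)$ for every $k$; the strictness $x_k(\tau_1)<x_k(\tau_2)$ follows because $f(\cdot;\tau_1)\not\equiv f(\cdot;\tau_2)$ on each subinterval, so each Sturm comparison is strict. The second case (reversed sign conditions) is identical after interchanging the roles of $\tau_1$ and $\tau_2$. The main obstacle I anticipate is making the right-anchored comparison on $(\eta,s)$ rigorous and correctly bookkeeping the index $k$ when a zero of $u$ and the corresponding zero of $v$ happen to lie on opposite sides of $\eta$; I expect to handle this cleanly by phrasing everything through the single global inequality $\theta_1\ge\theta_2$ rather than by matching zeros interval by interval.
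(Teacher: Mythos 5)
The paper itself offers no proof of this statement: Theorem~\ref{SturmTheoNew} is quoted as a known result of Dimitrov \cite{DKDLate}, so there is nothing in-paper to compare against. Judged on its own, your overall strategy --- run the classical Theorem~\ref{SturmTheoClassic} left-anchored at $r$ on $(r,\eta)$, where $f(\cdot;\tau_1)\ge f(\cdot;\tau_2)$, and right-anchored at $s$ on $(\eta,s)$, where the inequality reverses, with the two Wronskian limits making $r$ and $s$ admissible as common zeros via \eqref{a} and \eqref{b} --- is the correct one, and is essentially the argument of \cite{DKDLate}.

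Two points need repair before this is a proof. First, the pointwise Pr\"ufer inequality $\theta_1(x)\ge\theta_2(x)$ that you adopt as your global target is in general \emph{false} on $(\eta,s)$: at any crossing point there, $\theta_1'-\theta_2'=\bigl(f(\cdot;\tau_1)-f(\cdot;\tau_2)\bigr)\sin^2\theta<0$, so $\theta_2$ strictly overtakes $\theta_1$ between zeros. What the right-anchored comparison actually yields --- and all that is needed --- is the cumulative count: if $x_k(\tau_1)>\eta$, then $y(\cdot;\tau_2)$ has at least $n-k+1$ sign changes in $(x_k(\tau_1),s)$, hence at most $k-1$ zeros in $(r,x_k(\tau_1)]$, hence $x_k(\tau_2)>x_k(\tau_1)$. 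This conversion from right-counts to left-counts uses in an essential way the hypothesis that \emph{both} solutions have exactly $n$ zeros in $(r,s)$; your phrase ``converting back'' conceals this, and it must be made explicit. Second, the straddling issue you flag is settled not by the Pr\"ufer inequality but by a trichotomy: for each $k$, either $x_k(\tau_2)<\eta$, in which case the left-anchored comparison indexed by the zeros of $y(\cdot;\tau_2)$ on $(r,x_k(\tau_2))\subset(r,\eta)$ already forces $x_k(\tau_1)<x_k(\tau_2)$; or $x_k(\tau_1)>\eta$, covered by the right-anchored count above; or $x_k(\tau_1)\le\eta\le x_k(\tau_2)$, which is the desired conclusion except for the coincidence $x_k(\tau_1)=x_k(\tau_2)=\eta$, ruled out by starting the right-anchored count from the interval $(\eta,x_{k+1}(\tau_1))$. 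With these corrections the argument closes; the strictness claim via $f(\cdot;\tau_1)\not\equiv f(\cdot;\tau_2)$ on each comparison interval is fine.
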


To be able to use the above theorem for the analysis of the zeros of $X_{1}$-Laguerre polynomials, we consider the  Sturm-Liouville form

\begin{equation}
\label{ODEu}
u_{n}^{\prime\prime}(x)+\lambda_{n,4}(x)u_{n}(x)=0,\ \ \ u_{n}(x)=\frac{e^{-x/2} x^{(\alpha +1)/2}}{x+\alpha}\widehat{L}_{n}^{(\alpha)}(x),
\end{equation}
of the differential equation (\ref{eqLaguerre}), where
$$
\lambda_{n,4}(x)=\frac{ -x^4 +A_{3}(n,\alpha)x^3 +A_{2}(n,\alpha)x^2+A_{1}(n,\alpha)x+A_{0}(n,\alpha)}{ 4x^2 (x+\alpha)^2},
$$
with
\begin{eqnarray*}
A_{3}(n,\alpha)&=&2(2n-1),\\
A_{2}(n,\alpha)&=&2\alpha^{2}+4(2n-1)\alpha-3\\
A_{1}(n,\alpha)&=&2\alpha[(2n-1)\alpha+3],\\
A_{0}(n,\alpha)&=&\alpha ^2 (1-\alpha^{2}).
\end{eqnarray*}
The details of the transformation between the Sturm-Liouville form and (\ref{eqLaguerre}) can be found in Section 1.8 of \textbf{\cite{szego}}.  Finally we prove the following simple technical lemma:
\begin{lemma}\label{JacobiInequality}
Let $\{P_{n}^{(\alpha,\beta)}(x)\}_{n=0}^{\infty}$ be the Jacobi OPS. Then $P_{n}^{(\alpha,\beta)}(x)<P_{n+1}^{(\alpha,\beta)}(x)$ for every $x>1$ and $n$ is a positive integer.
\end{lemma}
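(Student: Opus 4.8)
The plan is to fix $\alpha>0$ (as in the paper's standing assumption $0<\alpha<\beta$) and to prove, by induction on $n$ while letting the parameters vary, the following uniform statement: for all $\alpha>0$ and $\beta>-1$ the difference
$$
D_{n}(x):=P_{n+1}^{(\alpha,\beta)}(x)-P_{n}^{(\alpha,\beta)}(x)
$$
is positive on $(1,\infty)$. I would establish this by showing that $D_n(1)>0$ and that $D_n$ is strictly increasing on $(1,\infty)$; the two facts together give $D_n(x)>D_n(1)>0$ for $x>1$. Two classical properties (in Szeg\H{o}'s normalization) will be used repeatedly: first, that all zeros of $P_m^{(\alpha,\beta)}$ lie in $(-1,1)$, so that $P_m^{(\alpha,\beta)}(x)>0$ for every $x\ge 1$ whenever $\alpha,\beta>-1$; and second, the differentiation formula $\tfrac{d}{dx}P_m^{(\alpha,\beta)}(x)=\tfrac12(m+\alpha+\beta+1)P_{m-1}^{(\alpha+1,\beta+1)}(x)$.

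For the boundary value I would use $P_m^{(\alpha,\beta)}(1)=\binom{m+\alpha}{m}$, which gives
$$
D_{n}(1)=\binom{n+1+\alpha}{n+1}-\binom{n+\alpha}{n}=\binom{n+\alpha}{n}\,\frac{\alpha}{n+1}>0,
$$
and this is the only place where the sign of $\alpha$ is needed. The base case $n=0$ is then immediate from $P_{1}^{(\alpha,\beta)}(x)-1=\tfrac12(\alpha+\beta+2)x+\tfrac12(\alpha-\beta)-1$, whose value at $x=1$ equals $\alpha>0$ and whose derivative $\tfrac12(\alpha+\beta+2)$ is positive.

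For the inductive step, assuming the statement for $n-1$ at every admissible pair of parameters, I would differentiate $D_n$ and apply the formula above to obtain
$$
D_{n}'(x)=\frac{n+\alpha+\beta+2}{2}P_{n}^{(\alpha+1,\beta+1)}(x)-\frac{n+\alpha+\beta+1}{2}P_{n-1}^{(\alpha+1,\beta+1)}(x).
$$
Rewriting this as
$$
D_{n}'(x)=\frac{n+\alpha+\beta+1}{2}\Big(P_{n}^{(\alpha+1,\beta+1)}(x)-P_{n-1}^{(\alpha+1,\beta+1)}(x)\Big)+\frac12\,P_{n}^{(\alpha+1,\beta+1)}(x),
$$
both summands are positive for $x>1$: the first by the induction hypothesis applied at the parameters $(\alpha+1,\beta+1)$, which are again admissible since $\alpha+1>0$ and $\beta+1>-1$, and the second by the positivity of Jacobi polynomials on $[1,\infty)$. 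Hence $D_n'(x)>0$ on $(1,\infty)$, closing the induction.

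The main subtlety I anticipate is precisely that the differentiation formula shifts $(\alpha,\beta)\mapsto(\alpha+1,\beta+1)$, so the induction cannot be run at a single fixed pair of parameters; it must be phrased uniformly over all admissible $(\alpha,\beta)$ so that the inductive hypothesis is available at the shifted parameters. Once the statement is set up this way the rest is routine, and it is worth recording explicitly that the hypothesis $\alpha>0$ is used only through the boundary estimate $D_n(1)>0$.
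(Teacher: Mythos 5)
Your proof is correct, but it takes a genuinely different route from the paper's. The paper argues by zero counting: using the interlacing of the zeros of $P_{n}^{(\alpha,\beta)}$ and $P_{n+1}^{(\alpha,\beta)}$ it locates $n$ sign changes of $Q_{n+1}:=P_{n+1}^{(\alpha,\beta)}-P_{n}^{(\alpha,\beta)}$ between the extreme zeros of $P_{n+1}^{(\alpha,\beta)}$, picks up one more sign change before $x=1$ from the boundary values $P_{n}^{(\alpha,\beta)}(1)=\binom{n+\alpha}{n}<\binom{n+1+\alpha}{n+1}=P_{n+1}^{(\alpha,\beta)}(1)$, and concludes that all $n+1$ zeros of $Q_{n+1}$ lie in $(-1,1)$, so $Q_{n+1}$ keeps the sign of $Q_{n+1}(1)>0$ on $(1,\infty)$. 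You instead run an induction on $n$, uniform over the admissible parameters, combining the same boundary computation at $x=1$ with the differentiation formula $\tfrac{d}{dx}P_{m}^{(\alpha,\beta)}=\tfrac12(m+\alpha+\beta+1)P_{m-1}^{(\alpha+1,\beta+1)}$ and the positivity of Jacobi polynomials on $[1,\infty)$; your decomposition of $D_n'$ and the handling of the parameter shift $(\alpha,\beta)\mapsto(\alpha+1,\beta+1)$ are both correct, and the identity $D_n(1)=\binom{n+\alpha}{n}\tfrac{\alpha}{n+1}$ checks out. The paper's argument is shorter and yields the extra information that all zeros of the difference lie in $(-1,1)$; yours avoids any appeal to interlacing, gives in addition that the difference is strictly increasing on $(1,\infty)$, and makes explicit that the hypothesis $\alpha>0$ enters only through the value at $x=1$ --- a point the paper leaves implicit. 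Both proofs are valid under the paper's standing assumption $\alpha>0$.
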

\begin{proof}
Given $n\geq1$, denote by $x_{n,k}$, $k=1,\ldots,n$, the zeros of $P_{n}^{(\alpha,\beta)}(x)=a_{n,n}x^{n}+\ldots+a_{n,0}$ in increasing order. By the interlacing property and $a_{n,n}>0$, the polynomial $Q_{n+1}(x):=P_{n+1}^{(\alpha,\beta)}(x)-P_{n}^{(\alpha,\beta)}(x)$ has $n$ zeros in $(x_{n+1,1},x_{n+1,n+1})$ and
$$
P_{n+1}^{(\alpha,\beta)}(x_{n+1,n+1}+\epsilon)<P_{n}^{(\alpha,\beta)}(x_{n+1,n+1}+\epsilon),
$$
for $\epsilon>0$ sufficiently small. On the other hand, we have
$$
P_{n}^{(\alpha,\beta)}(1)=\binom{n+\alpha}{n}<\binom{n+1+\alpha}{n+1}=P_{n+1}^{(\alpha,\beta)}(1)
$$
which implies that $Q_{n+1}(x)$ has $n+1$ zeros in $(-1,1)$. Since the former inequality yields $Q_{n+1}(1)>0$, then $Q_{n+1}(x)>0$ for every $x>1$.
\end{proof}

%\begin{lemma}\label{LaguerreDensity}
%Denote by $X_{n}$ a set of the zeros of $n$-th Laguerre polynomials. Then $X_{n}$ is dense in $[0,~\infty)$.
%\end{lemma}
%\begin{proof}
%The proof is based the direct application of Theorem 4.2 in \textbf{\cite{Chihara}} noting that
%$$
%L_{n+1}^{(\alpha)}(x)=(x-c_{n+1})L_{n}^{(\alpha)}(x)-\lambda_{n+1}L_{n-1}^{(\alpha)}(x),
%$$
%where $c_{n+1}=2n+1+\alpha$ and $\lambda_{n+1}=n(n+\alpha)$ is three-term recurrence for monic Laguerre polynomials and
%$$
%\lim_{n\to\infty}c_{n}=\infty \ \ \mbox{and} \ \ \lim_{n\to\infty}\frac{\lambda_{n}}{c_{n}c_{n+1}}=\frac{1}{4}.
%$$
%\end{proof}

\section{Proofs of main results}
\begin{proof}[\textbf{Proof of Theorem \ref{JacobiExceptionalZeroTheo}}]
First we show that there is a partition in $(b,c]$ which divides the set of \textit{exceptional} zeros of the sequence $\{\widehat{P} _{n}^{(\alpha,\beta)}(x)\}_{n=1}^{\infty}$ in $(b,c]$ for certain values of $\alpha$ and $\beta$. The \textit{exceptional} zero of the $n$th polynomial of the sequence belongs to $(b,\gamma_{n}b)$, where $\gamma_{n}:=(2n+\alpha+\beta)/(2n-2+\alpha+\beta)$, for $n\geq2$. In fact, \eqref{relacaoJacobi} implies
$$
\widehat{P} _{n}^{(\alpha , \beta )}(x)=f_{1}(x)P_{n-1}^{(\alpha , \beta )}(x)-\frac{P _{n-2}^{(\alpha , \beta )}(x)}{2n-2+\alpha+\beta},
$$
where
$$
f_{1}(x):=-\frac{1}{2}(x-b)+\frac{b}{2n-2+\alpha+\beta},
$$
and $f_{1}(x)=0$ if and only if $x=\gamma_{n}b$. Since the leading coefficient of the Jacobi polynomial is positive and the \textit{exceptional} zeros of $X_{1}$-Jacobi polynomials belong in $(b,c]$, we have that $f_{1}(x)P_{n-1}^{(\alpha , \beta )}(x)$ and $-P _{n-2}^{(\alpha , \beta )}(x)/(2n-2+\alpha+\beta)$ have opposite signs in $(b,\gamma_{n}b)$ and the same sign in $(\gamma_{n}b,c]$. Therefore, the \textit{exceptional} zero of $\widehat{P} _{n}^{(\alpha,\beta)}(x)$ belongs to $(b,\gamma_{n}b)$. Note that $\gamma_{1}b=c=\widehat{x} _{1,1}^{(\alpha , \beta )}$ and $\gamma_{n}$ decreases strictly in $n$. Hence, the \textit{exceptional} zeros $\widehat{x} _{2,2}^{(\alpha , \beta )}$ and $\widehat{x} _{1,1}^{(\alpha , \beta )}$ are distinct for any $\alpha$ and $\beta$. On the other hand, $\widehat{P} _{n}^{(\alpha , \beta )}(\gamma_{n+1}b)> 0$ for $\alpha$ and $\beta$ sufficiently close. In fact, by formula \eqref{relacaoJacobi} and Lemma \ref{JacobiInequality}, we have
$$
\widehat{P} _{n}^{(\alpha , \beta )}(\gamma_{n+1}b)=\frac{b}{2n-2+\alpha+\beta}\Big[2\frac{P_{n-1}^{(\alpha , \beta )}(\gamma_{n+1}b)}{2n+\alpha+\beta}-\frac{\beta-\alpha}{\beta+\alpha}P_{n-2}^{(\alpha , \beta )}(\gamma_{n+1}b)\Big]> 0.
$$
Then, subject to this condition that $\alpha$ and $\beta$ are sufficiently close, we have $\gamma_{n+1}b< \widehat{x} _{n,n}^{(\alpha , \beta )} <\gamma_{n}b$. Therefore
$$
b<\ldots<\widehat{x} _{n,n}^{(\alpha,\beta)}<\ldots<\widehat{x} _{2,2}^{(\alpha , \beta )}<\widehat{x} _{1,1}^{(\alpha , \beta )}=c.
$$

Since the zeros of $\widehat{P} _{n}^{(\alpha , \beta )}(x)$ are continuous functions of the parameters $\alpha$ and $\beta$, we find that the inequality \eqref{zerosinequalities} is initially satisfied when the parameters are sufficiently close. If we change the parameters from the previous condition on $\alpha$ and $\beta$, the inequality \eqref{zerosinequalities} still remains valid. Suppose that $(\alpha,\beta)$ and $k$ exist such that
$$
\ldots<\widehat{x} _{k+2,k+2}^{(\alpha , \beta )}<\widehat{x} _{k+1,k+1}^{(\alpha , \beta )}=\widehat{x} _{k,k}^{(\alpha , \beta )}<\widehat{x} _{k-1,k-1}^{(\alpha , \beta )}<\ldots.
$$
%and denote by $\xi:=\widehat{x} _{k+1,k+1}^{(\alpha , \beta )}$.
Let us set $\xi:=\widehat{x} _{k,k}^{(\alpha , \beta )}$. Since $\widehat{x} _{2,2}^{(\alpha , \beta )}\neq\widehat{x} _{1,1}^{(\alpha , \beta )}$ for any values of $\alpha$ and $\beta$, then \eqref{relacaoJacobi01} yields
$$
0>-\frac{1}{4}(\xi-b)^{2}P_{k-1}^{(\alpha , \beta )}(\xi)=h_{k-1}\widehat{P} _{k-1}^{(\alpha , \beta )}(\xi)>0,\ \ k\geq2,
$$
which is a contradiction.

We know that $\widehat{L} _{1}^{(\alpha  )}(x)=-(x+\alpha+1)$ and the exceptional zeros of $\widehat{L} _{n}^{(\alpha  )}(x)$ are smaller than $-\alpha$ (see \textbf{\cite{GoKaMi09}}). To establish the lower bound for these zeros it suffices to analyze the formula \eqref{relacaoLaguerre}. Note that $-\widehat{L} _{1}^{(\alpha  )}(x)L_{n-1}^{(\alpha  )}(x)$ and $L_{n-2}^{(\alpha  )}(x)$ have opposite signs in $(-\alpha-1,0)$ and the same sign in $(-\infty,-\alpha-1)$ for $n\geq 2$. Then the exceptional zeros belong to $[-\alpha-1,-\alpha)$.

Inequality \eqref{LaguerreZerosInequalities} is an immediate consequence of \eqref{zerosinequalities} and Theorem \ref{JacobiLaguerreConvergence}. Indeed, since
$$
\widehat{x} _{n+1,n+1}^{(\alpha,\beta)}<\widehat{x} _{n,n}^{(\alpha,\beta)}\ \  \mathrm{if\ and\ only\ if}\ \
\frac{\beta(1-\widehat{x} _{n,n}^{(\alpha,\beta)})}{2}<\frac{\beta(1-\widehat{x} _{n+1,n+1}^{(\alpha,\beta)})}{2}
$$
for every $n\geq1$. Taking $\beta$ sufficiently large, we obtain the inequality \eqref{LaguerreZerosInequalities}.
\end{proof}

\begin{proof}[\textbf{Proof of Theorem \ref{InterlacingMonotZerosLaguerre}}]
The first statement is a consequence of Theorem \ref{SturmTheoClassic}. Indeed, note that
$$
\lambda_{n+1,4}(x)-\lambda_{n,4}(x)=\frac{1}{x}
$$
and the solution $u_{n+1}(x)$ satisfy the conditions \eqref{a} and \eqref{b} for $r=0$ and $s=\infty$.

To establish the monotonicity of the \textit{regular} zeros, note that
$$
\frac{\partial\lambda_{n,4}(x)}{\partial\alpha}=\frac{p_{n,4}(x;\alpha)}{2 x^2 (\alpha +x)^3}=\frac{x^{4}+B_{3}(\alpha)x^{3}+B_{2}(\alpha)x^{2}+B_{1}(\alpha)x+B_{0}(\alpha)}{2 x^2 (x+\alpha)^3},
$$
with
\begin{eqnarray*}
B_{3}(\alpha)&=& 2\alpha,\\
B_{2}(\alpha)&=& 6,\\
B_{1}(\alpha)&=&-2\alpha(1+\alpha^{2}),\\
B_{0}(\alpha)&=&-\alpha^{4}.
\end{eqnarray*}
Obviously, the sequence $\{B_{k}(\alpha)\}_{k=0}^{3}$ has just one sign change for every $\alpha>0$. Therefore, by the classical Descartes' rule of signs, $p_{n,4}(x;\alpha)$ has just one positive zero and the conclusion follows from the first statement of Theorem \ref{SturmTheoNew}.
\end{proof}

\begin{proof}[\textbf{Proof of Theorem \ref{JacobiLaguerreConvergence}}]
The proof is based on the Gaussian hypergeometric representation of the Jacobi polynomials
\begin{align}\label{jacobi}
\begin{split}
P^{(\alpha,\beta)}_{n}\Big(1-\frac{2x}{\beta}\Big)&=\frac{(\alpha+1)_{n}}{n!}\, _2F_1\left(-n,\alpha +\beta +n+1;\alpha +1;\frac{x}{\beta }\right)\\
&=\sum_{k=0}^{n}\frac{R_{n,k}^{(\alpha)}(x)}{\beta^{k}},
\end{split}
\end{align}
where $R_{n,0}^{(\alpha)}(x)=L^{(\alpha)}_{n}(x)$ (see \textbf{\cite[Section 5.3]{szego}} and \textbf{\cite{Eliel}} for the remaining $R_{n,k}^{(\alpha)}(x)$).

For $\alpha<\beta$, we have
$$
b=\frac{\beta+\alpha}{\beta-\alpha}=\Big(1+\frac{\alpha}{\beta}\Big)\sum_{k=0}^{\infty}\Big(\frac{\alpha}{\beta}\Big)^{k}=
1+2\sum_{k=1}^{\infty}\Big(\frac{\alpha}{\beta}\Big)^{k}.
$$
Applying \eqref{jacobi} in \eqref{relacaoJacobi}, we obtain
\begin{eqnarray*}
\frac{\widehat{P}^{(\alpha,\beta)}_{n}(1-2x/\beta)}{(2n-2+\alpha+\beta)^{-1}}
&=&\Big\{(2n-2+\alpha+\beta)\Big[\frac{x+\alpha}{\beta}+\sum_{k=2}^{\infty}\Big(\frac{\alpha}{\beta}\Big)^{k}\Big]
+1+2\sum_{k=1}^{\infty}\Big(\frac{\alpha}{\beta}\Big)^{k}\Big\}\\
& &\sum_{k=0}^{n-1}\frac{R_{n-1,k}^{(\alpha)}(x)}{\beta^{k}}-\sum_{k=0}^{n-2}\frac{R_{n-2,k}^{(\alpha)}(x)}{\beta^{k}}\\
&=&\Big[(2n-2+\alpha+\beta)\frac{x+\alpha}{\beta}+1\Big]R_{n-1,0}^{(\alpha)}(x)-R_{n-2,0}^{(\alpha)}(x)+O(\beta^{-1})\\
&=&(x+\alpha+1)L_{n-1}^{(\alpha)}(x)-L_{n-2}^{(\alpha)}(x)+O(\beta^{-1})\\
&=&-\widehat{L}_{n}^{(\alpha)}(x)+O(\beta^{-1}).
\end{eqnarray*}

Setting $y=\sqrt{2\alpha}x+\alpha$ and manipulating of formula \eqref{relacaoLaguerre}, we obtain
\begin{eqnarray*}
  (n-1)!\Big(\frac{2}{\alpha}\Big)^{(n-1)/2}\frac{\widehat{L}_{n}^{(\alpha)}(y)}{y-\widehat{x}^{(\alpha)}_{n,1}}&=&
  -\frac{y+\alpha+1}{y-\widehat{x}^{(\alpha)}_{n,1}}(n-1)!\Big(\frac{2}{\alpha}\Big)^{(n-1)/2}L^{(\alpha)}_{n-1}(y)+\\
   & &\frac{(n-1)\sqrt{2}}{(y-\widehat{x}^{(\alpha)}_{n,1})\sqrt{\alpha}}(n-2)!\Big(\frac{2}{\alpha}\Big)^{(n-2)/2}L_{n-2}^{(\alpha)}(y).
\end{eqnarray*}
%\begin{eqnarray*}
%  (n-1)!\Big(\frac{2}{\alpha}\Big)^{(n-1)/2}\frac{\widehat{L}_{n}^{(\alpha)}(x)}{x-\widehat{x}^{(\alpha)}_{n,1}}&=&
%  -\frac{x+\alpha+1}{x-\widehat{x}^{(\alpha)}_{n,1}}(n-1)!\Big(\frac{2}{\alpha}\Big)^{(n-1)/2}L^{(\alpha)}_{n-1}(x)+\\
%   & &\frac{(n-1)\sqrt{2}}{(x-\widehat{x}^{(\alpha)}_{n,1})\sqrt{\alpha}}(n-2)!\Big(\frac{2}{\alpha}\Big)^{(n-2)/2}L_{n-2}^{(\alpha)}(x).
%\end{eqnarray*}
Theorem \ref{JacobiExceptionalZeroTheo} assures that $-\widehat{x}^{(\alpha)}_{n,1}\in(\alpha,\alpha+1]$ for every $n\geq 1$. Therefore, considering the limit in the above equation and \cite{Calogero}, we obtain
$$
(n-1)!\lim_{\alpha\to\infty}\Big(\frac{2}{\alpha}\Big)^{(n-1)/2}\frac{\widehat{L}_{n}^{(\alpha)}(y)}{y-\widehat{x}^{(\alpha)}_{n,1}}=- H_{n-1}(x).
$$
\end{proof}

\section{Final Remarks}
$X_{1}$-Laguerre polynomials treated in this manuscript is the particular case of type I $X_{m}$-Laguerre polynomials denoted by $L_{m,n}^{I,\alpha}(x)$. There is a practical way in \textbf{\cite{BonneuxArno}} to construct these polynomials by
%the type I $X_{m}$-Laguerre polynomials in \textbf{\cite{BonneuxArno}} given and denoted by
\begin{equation}\label{LaguerreTypeOne}
L_{m,n}^{I,\alpha}(x)=
\begin{vmatrix} L_{m}^{(\alpha)}(-x) & -L_{n-m-1}^{(\alpha)}(x) \\ L_{m}^{(\alpha-1)}(-x) & L_{n-m}^{(\alpha-1)}(x) \end{vmatrix},
\end{equation}
for $n\geq m$. If we denote $z_{m,n,k}$, for $k=1,\ldots,m$, the \textit{exceptional} zeros of $L_{m,n}^{I,\alpha}(x)$ in decreasing order, Proposition 3.2 of \textbf{\cite{Paco}} assures that the $z_{m,n,k}$ are all simple and located in $(-\infty,0)$ and, further, by  Proposition 3.4 these zeros  converge to the zeros of $L_{m}^{(\alpha-1)}(-x)$ as $n$ tends to infinity. Theorem \ref{JacobiExceptionalZeroTheo} is the particular case when we have $m=1$. It is natural to investigate if, for the general case when $m>1$, the \textit{exceptional} zeros of $L_{m,n}^{I,\alpha}$ satisfy  inequality relations  like  \eqref{LaguerreZerosInequalities}. We do not have any proof of this, but the following numerical results suggest that this might be true.
\begin{center}
\begin{tabular}{ |p{1cm}||p{2.5cm}|p{2.5cm}|p{2.5cm}||p{2.5cm}| }
 \hline
 \multicolumn{5}{|c|}{Table of \textit{Exceptional} zeros for $m$=4 and $\alpha=1$ } \\
 \hline
 $z_{m,n,k}$ & $n=6$ & $n=10$ & $n=14$ & $-x_{m,k}^{(\alpha-1)}$\\
 \hline
 $z_{m,n,1}$ & -0.62239 & -0.53595 & -0.49680 & -0.32254 \\
 $z_{m,n,2}$ & -2.36155 & -2.20066 & -2.12323 & -1.74576 \\
 $z_{m,n,3}$ & -5.47132 & -5.24298 & -5.12778 & -4.53662 \\
 $z_{m,n,4}$ & -10.6643 & -10.3770 & -10.2244 & -9.39507 \\
 \hline
\end{tabular}
\end{center}
\begin{problem}
Let $L_{m,n}^{I,\alpha}(x)$ the type I $X_{m}$-Laguerre polynomials defined in \eqref{LaguerreTypeOne} and denote $z_{m,n,k}=z_{m,n,k}^{(\alpha)}$, $k=1,\ldots,m$, the \textit{exceptional} zeros of them in decreasing order. Then we have
$$
\ldots<z_{m,n-1,k}<z_{m,n,k}<z_{m,n+1,k}<\ldots <-x_{m,k}^{(\alpha-1)}
$$
for each $k=1,\ldots, m$.
\end{problem}

Another interesting question involving the identities \eqref{JacobiLaguerreFormula} and \eqref{LimX1LaguerreZeroToHermite} is about a monotonicity with respect to the corresponding parameter. It is known from \textbf{\cite{DiFer07,DiFer09}} that the expressions
\begin{equation}\label{remark}
\frac{f_{n}(\alpha,\beta)\big(1-x_{n,k}^{(\alpha,\beta)}\big)}{2} \ \ \mbox{and} \ \ \frac{x_{n,k}^{(\alpha)}-(2n+\alpha-1)}{\sqrt{2(n+\alpha-1)}}
\end{equation}
are strictly increasing functions and the last one converges to $h_{n,k}$ for each $k=1,\ldots,n$ when we take $\alpha$ as sufficiently large, where
\begin{equation}\label{f}
f_{n}(\alpha,\beta)=2n^{2}+2n(\alpha+\beta+1)+(\alpha+1)(\beta+1)
\end{equation}
and $x_{n,k}^{(\alpha)}$, $x_{n,k}^{(\alpha,\beta)}$, $h_{n,k}$ denote the zeros of the classical Laguerre, Jacobi and Hermite orthogonal polynomials, respectively. The numerical tests confirm the similar monotonic behavior of the expression as the second one in \eqref{remark} involving the \textit{regular} zeros of $X_{1}$-Laguerre polynomials and  the first expression of \eqref{remark} involving all zeros of $X_{1}$-Jacobi polynomials.

The above observations still remain open to being proved.

\section*{Acknowledgements}
The author is grateful to Prof. Alagacone S. Ranga for reading a preliminary version of this note and to Prof. Dimitar K. Dimitrov for his constructive suggestions.

\end{document}